\newtheorem{theorem}{Theorem}
\newtheorem{proposition}[theorem]{Proposition}
\newtheorem*{definition}{Definition}
\newtheorem*{atf1}{\bf Theorem [Conjecture of Andrianov and Panchishkin]}
\newtheorem*{atf2}{\bf Theorem[Conjecture of Deligne, Yoshida, Panchishkin]}
\theoremstyle{remark}
\numberwithin{theorem}{section} \numberwithin{equation}{section}
\newcommand{\A}{\mathbb{A}} 
\newcommand{\R}{\mathbb{R}}
\newcommand{\C}{\mathbb{C}}
\newcommand{\Q}{\mathbb{Q}}
\newcommand{\Z}{\mathbb{Z}}
\newcommand{\N}{\mathbb{N}}
\newcommand{\cM}{\mathcal{M}}
\def\H{{\mathfrak H}}
\begin{document}
\setlength{\parindent}{0cm}
\sf              
\title[Towards Functoriality 
Of Spinor L-Functions]
{Towards Functoriality Of Spinor L-functions}
\author{Bernhard Heim}
\address{Max-Planck Institut  f\"ur Mathematik, Vivatsgasse 7, 53111 Bonn, Germany}
\email{heim@mpim-bonn.mpg.de}
\subjclass[2000] {11F46, 11F66, 11F67, 11F70, 32N05}
\begin{abstract}
The object of this work is the spinor L-function of degree $3$ and certain degeneration
related to the functoriality principle. We study liftings of automorphic forms on the pair of symplectic groups
$\left(\text{GSp}(2),\text{GSp}(4)\right)$ to $\text{GSp}(6)$. 
We prove cuspidality and demonstrate the compatibility with conjectures of 
Andrianov, Panchishkin, Deligne and Yoshida. This is done on a motivic and analytic level.
We discuss an underlying torus and L-group homomorphism and put our results in the context
of the Langlands program.
\end{abstract}
\maketitle
\section{Introduction}
The {\it principle of functoriality} describes deep 
conjectural relationships among automorphic representations on
different groups \cite{La97}. 
In this paper we consider automorphic representations and liftings of the pair of
groups $\text{GL}(2)$, $\text{GSp}(4)$ to $\text{GSp}(6)$, where
$\text{GSp}(2n)$ denotes the symplectic group with similitudes of degree $n$.
The lifting will be described in terms of automorphic representations
$\pi \in \mathcal{A}(\text{GSp}(2n))_k$
with holomorphic discrete series at archimedean places (of weight $k$),
and convolutions of their spinor L-functions.
This leads to map
\begin{equation}
\mathcal{A}(\text{GL}(2)(\A)_{k-2} \times \mathcal{A}(\text{GSp}(4)(\A)_{k} 
\longrightarrow \mathcal{A}(\text{GSp}(6))(\A)_{k}.
\end{equation}
Here $\A$ denotes the ad\'{e}les over $\Q$.
We prove a cuspidality criterion and show that all
other formally possible weights can not occur.
Moreover we prove that the lifting is compatible with
conjectures of Andrianov \cite{An74}, Yoshida \cite{Yo01}, 
Panchishkin \cite{Pa07} and Deligne \cite{De79} relating the functional equation
of the spinor L-function of degree $3$ and the arithmetic nature of the critical values.
Morover the underlying $L$-function can be identified with
the spinor L-function of
\begin{equation}
\text{GL}(2)(\A) \times \text{GSp}(4)(\A).
\end{equation}
An integral representaion, functional equation and special values results have been indepently
obtained by Furusawa \cite{Fu93} and Heim \cite{He99}, and B\"ocherer and Heim \cite{BH00}, \cite{BH06}.
The results and observations obtained in this paper give strong evidence
that the Miyawaki conjecture \cite{Mi92}, \cite{He07d} of type II
is essentially not only the lift of a pair of two elliptic modular forms,
in contrast, it can be viewed as a lifting of a pair of an elliptic modular form
and a Siegel modular form of degree $2$. 
There is also promising numerical
data due to Miyawaki\cite{Mi92} supporting this approach. 
Let $\lambda_p(F)$ be the $p$-th Hecke eigenvalue of Siegel cuspidal Hecke eigenform.
By a tremendous effort Miyawaki succeded in determine the second eigenvalue of 
the Hecke eigenform $F_{14}$ of degree $3$ and weight $14$:
$\lambda_2(F_{14}) = -2^7 \cdot 2295$.
In this interesting case our method gives the simple statement that
\begin{equation}
\lambda_2(F_{14}) = \tau(2) \cdot \lambda_2(G),
\end{equation}
where $\tau(2)=-24$ is 
the second Fourier coefficient of the Ramanujan $\Delta$-function
and $G$ the non-trivial Siegel cuspform of weight $14$ and degree $2$.
Experts in the this field, as Ibukiyama, Poor and Yuen,
are optimistic that such calculations could be done
for higher weights in the near future.
We also would like to mention the recent work 
of Chiera and Vankov \cite{CV08},
which is the first in a series of papers
attacking this problem by starting with the weight of first occurence $k=12$,
in which the conjecture of Miyawaki is now fully proven by Ikeda \cite{Ik06} and Heim \cite{He07d}.
\\
\\
Concerning numerical data, Langlands [\cite{La79a},p. 213] stated: 
\begin{center}
\it
The necessary equalities are too complicated to be merely coincidences,\\ and we may assume with some
confidence, ... .
\end{center}
This was in reference to the paper of Kurokawa \cite{Ku78}, where a
statement of what is now called the Saito-Kurokawa lifting had been
verified for small primes, broaching what has become an important
research subject over the last $30$ years. 
The underlying principle of our lifting can be considered as a
higher dimensional analog of the $L$-group homomorphism 
:
\begin{equation}
\text{GL}(2)(\C) \times \text{GL}(2)(\C) \longrightarrow \text{GL}(4)(\C)
\end{equation}
given by Ramakrishnan \cite{Ra00}, which goes beyond endoscopic considerations.
To explain this will require some preparation.
It will become apparent that the approach here is only the tip of an
iceberg, suggesting optimism about further results in this
direction. We note that not much is known concerning functoriality and
lifting of automorphic representations without the assumption of a
Whittaker model. At this point we would like to suggest the reader to
consult the excellent overview article of Raghuram and Shahidi \cite{RS08}, and
especially chapter 7, with the desription of Harders fundamental work \cite{Ha83}.
\\
\\
Let $G/K$ be a reductive group over a number field $K$ and
$\mathcal{A}(G)$ the space
of automorphic representations $\pi$ of $G(\mathbb{A}_K)$, where $\mathbb{A}_K$ are the adeles of $K$.
A general question addressed in the Langlands program \cite{Ge84} arises 
if one takes the convolution L-function of two automorphic representations, 
and asks whether there exists another automorphic representation with
this L-function. 
Let $G_1,G_2$, $G_3$ be three 
reductive groups defined over the same number field $K$.
Let $\rho_1,\rho_2$ be finite dimensional representations of the $L$-groups of $G_1$ and $G_2$.
In the sense of 
Langlands (cf. \cite{GS88}) we attach to 
the data $(\pi, \rho)$ the {\it Langlands L-function} $L(s,\pi,\rho)$.
In this notation we can ask whether there exists a map
\begin{equation}
\boxtimes \,\,\,\,
\begin{cases}
\mathcal{A}(G_1) \times \mathcal{A}(G_2) &\longrightarrow \quad \mathcal{A}(G_3),\\
\left( \pi^1, \pi^{2}\right) & \mapsto \quad \quad \pi^1 \boxtimes \pi^{2},
\end{cases}
\end{equation}
with an automorphic representation 
$\pi:= \pi^1 \boxtimes \pi^{2} \in \mathcal{A}(G_3)$ which has the property
that the convolution L-function $L(s,\pi^1 \otimes \pi^{2},\rho_1 \otimes \rho_2)$ is equal to
a L-function $L(s,\pi^1 \boxtimes \pi^{2},\rho_3)$, 
where $\rho_3$ is a suitable finite dimensional representation of
the $L$-group of $G_3$.
One is far away from having a general solution of this problem and it seems 
that even the {\it first examples} given in the literature demonstrate the depth
of the issues.
This is illustrated by the work of Ramakrishnan 
\cite{Ra00} and Kim and Shahidi \cite{KS02}.
Here we briefly recall results of Ramakrishnan.
He constructed a map
\begin{equation}
\mathcal{A}(Gl_2) \times \mathcal{A}(GL_2) \longrightarrow \quad \mathcal{A}(GL_4),
\end{equation}
where $\rho_i$, $1 \leq i \leq 3$ is the standard representation.
The existence of such a map had been expected for many years, but
several difficult auxiliary results were needed. Since we are in a
parallel situation, we mention some essential auxiliary results still
needed in order to prove the Main Conjecture in \cite{He07e}.
We recall briefly some of the main ingredients in the proof. To quote
Ramakrishnan \cite{Ra00}: 
\begin{center}
\it
Our proof uses a mixture of converse theorems,
base change and descent, and it also appeals to the local 
regularity properties of Eisenstein series and the scalar products of their truncation.
\end{center}
This involves a converse theorem of Cogdell and Piatetski-Shapiro \cite{CP96}, 
base change results of Arthur and Clozel \cite{AC89}, 
results of Garrett \cite{Ga87} and Piateski-Shapiro 
and Rallis \cite{PR87} towards the triple product L-function and
finally applications of Arthur's truncation \cite{Ar80} 
of non-cuspidal Eisenstein series to get boundness in vertical strips of the 
triple product L-functions.
Recently the L-functions considered in this paper showed up 
several interesting papers.
In dissertations of Agarwal \cite{Ag07} and Saha \cite{Sa08} the level aspect and $p$-adic properties
had been considered.
Both have beautiful applications in mind. The first indicates
applications towards functoriality \`{a} la Ramakrishnan and
the second one goes towards the modularity approach of the
Iwasawa conjecture related to ideas of Skinner and Wiles.

In two manuscripts Pitale and Schmidt \cite{PS1},\cite{PS2}
consider the L-function of $\text{GSp}(4) \times \text{GL}(2)$ mainly for holomorphic modular forms
and partly refine the results of Furusawa \cite{Fu93} and Heim \cite{He99},
and B\"ocherer and Heim \cite{BH00}, \cite{BH06} in the level aspect.

Hence it is obvious that considering the $\text{GSp}(4) \times \text{GL}(2)$
spinor L-function for the family of weights $k,k-2$, even in the 
highly assumed case of endoscopy (e.g. Saito-Kurokawa lifts)
gives a significant new approach to several conjectures related to
the spinor L-function of Siegel modular forms of degree $3$.

Recently Miyawaki's conjecture for $\text{F}12$ had been proven \cite{He07d}.
Applications are indicated by Dalla Piazza and van Geemen \cite{PG1}.
These L-series and their relation to Galois representations are of great interest
in mathematical physics, where they show up in the bosonic and superstring measures.

\section*{Acknowledgements}
This paper was supported by the Max Planck Institute of Mathematics
in Bonn during my stay between Mai/July 2008.
I also would like to thank Christian Kaiser for
several very useful discussions on the topic.

\section{Summary of the main results}
The lifting we predict would be an example of Langlands'
functoriality principle. Since our predicted lift goes beyond endoscopy, and since it
satiesfies all known motivic and analytic viewpoints and which
is compatible with conjectures of Andrianov,Deligne, Yoshida, Panchiskin,
we have to generalize the concept of $L$-group homomorphism.

The L-group of the group of projective symplectic similitudes of degree $n$ is
the spin group $\text{Spin}(2n+1)(\C)$.
Let $r_n$ be complex representations of the L-groups
$\text{Spin}(2n+1)(\C)$. Langlands conjectures that for any L-group homomorphism
\begin{equation}
\Theta:
\text{Spin}(3)(\C) \, \times \, \text{Spin}(5)(\C)  \longrightarrow  \text{Spin}(7)(\C)
\end{equation}
there exists a map
\begin{equation}
\Theta^*: \mathcal{A}(PGSp(2)) \times \mathcal{A}(PGSp(4)) \longrightarrow \mathcal{A}(PGSp(6)) 
\end{equation}
such that the associated L-functions are equal, namely,
\begin{equation}
L(s,\Theta^*(\pi^1,\pi^2),r_3) = L(s,\pi^1 \otimes \pi^2,r_1 \otimes r_2).
\end{equation}
Let $\rho_n$ be the spin representation of $\text{Spin}(2n+1)(\C)$.
Since we are interested in a reasonable generalization of the Miyawaki
conjecture, and in a proof of Andrianov's and Deligne's conjecture for such
liftings for the spinor L-function of degree $3$, we restrict ourself
to the space  $\mathcal{A}(PGSp(2n))_k$
of all automorphic representations
$\pi = \pi_F$, associated to Siegel modular forms $F$ of degree $n$ and weight $k$.
Take $r_i =\rho_i$ (for $n=1,2,3$).
To study the spinor L-function of degree $3$ 
fix a maximal torus $T(n)$ in $\text{Spin}(2n+1)$ parametrized by
$(a_0,a_1,\cdots, a_n)$ with $a_i \in \C^\times$.
We take the torus homomorphism $\Theta: T(3) \times T(5) \longrightarrow T(7)$,
or lifted L-group homomorphism, given by
\begin{equation}
\Theta
\Big((\alpha_0, \alpha_1) , (\beta_0,\beta_1,\beta_2) \Big)\;=\;
(\alpha_0 \beta_0,\,\beta_1,\,\beta_2,\,\alpha_1).
\end{equation}
We are aware that this seems to be very bizarre, but since
we can not assume endoscopy in general (since we not only consider 
Saito-Kurokawa cuspforms), it seems that this
is the only way to state such a lift in the framework
of Langlands description of lifts in terms of Satake parameters.

Now suppose that the corresponding functorial map $\Theta^*(\pi^1,\pi^2)$
exists. Then put 
\begin{equation}
\pi:= \pi^1 \boxtimes \pi^2 := \Theta^*(\pi^1,\pi^2).
\end{equation}
{\it A priori} it is not clear that these conjectural lifts are cuspidal. In
this paper we show that they are. Let the corresponding cuspidal Siegel
modular forms of degree $1,2,3$ have weights $k_1,k_2,k_3 \in \N$. 
We prove that only the triple $k-2,k,k$ can occur. This conclusion includes
Miyawaki's conjecture \cite{Mi92} of type II \cite{He07e}.
We also give a motivic interpretation of these lifts and deduce the same result.
Now let $F= h \boxtimes G$ with suitable $h\in S_{k-2}, G \in S_k^2, F \in S_k^3$, where $S_l^n$ is the space
of cuspidal Siegel modular forms of weight $k$ and degree $n$. Let $\lambda_p$ be the $p$-th Hecke eigenvalue.
Then, for all primes $p$, the eigenvalues of the $h,G,F$ satisfy 
\begin{equation}
\lambda_p(F) = \lambda_p(h) \cdot \lambda_p(G).
\end{equation}
We prove the conjecture of Andrianov and Panchiskin
on the meromorphic continuation and functional equation of the
spinor L-function of degree $3$ for this type of cuspidal Hecke eigenform
(see section \ref{sectionconjectures} for details).
\begin{atf1}
Let $k_1,k_2$ be positive even integers.
Let $\pi^1 \in \mathcal{A}(GL(2))_{k_1}$ and $\pi^{2} \in 
\mathcal{A}(GSp(4))_{k_2}$ 
be two automorphic representations. Let $\pi := \pi^1 
\boxtimes \pi^2\in \mathcal{A}(GSp(6))_{k}$ be modular.
Then necessarily $k_1=k-2$ and $k_2=k$.
The L-function $L(s, \pi, \rho_3)$ has a meromorphic continuation to the 
whole complex plane. 
Let the first Fourier-Jacobi coefficient of $\pi^2$ be non-trivial. Then
the completed L-function
\begin{equation}
\widehat{L}(s,\pi,\rho_3) =  L_{\infty}(s, \pi,\rho_3) \, L(s,\pi,\rho_3)  ,
\end{equation}
has a functional equation under $s \mapsto 3k-5-s$, where
$$
L_{\infty}(s,\pi) := \Gamma_{\C}(s) \, \prod_{m=1}^3 \, \Gamma_{\C}(s-k+m).$$
\end{atf1}
For $\pi^2$ attached to a Saito-Kurokawa lift the spinor L-function
is entire. 
The special values of the lifts are apparently compatible with
Deligne's conjecture on the arithmetic of 
special values specialized to spinor L-function and
to twisted spinor L-function by Yoshida and Panchishkin.
\begin{atf2}
Let $\widetilde{\pi}= \pi' \boxtimes \pi'' \in \mathcal{A}(\text{GSp}(6))_k$ for $\pi' \in \mathcal{A}(\text{GL}(2))_{k-2}$ 
and $\pi'' \in \mathcal{A}(\text{GSp}(4))_{k}$ primitive. 
Let $F_{\widetilde{\pi}} \in S_k^3$ be normalized.
Then the critical integers of the spinor L-function of $F_{\widetilde{\pi}}$ are
exactly $m=k,k+1,\ldots, 2k-5$.
Moreover there exists a positive $\Omega \in \R$ such that
\begin{equation}
\frac{L(m, F_{\widetilde{\pi}},\rho_3)}{{\pi}^{4m-3k+6} \Omega} \in E.
\end{equation}
\end{atf2}
It is surprising that the spinor L-function attached
to a Siegel modular form $F= h \boxtimes G \in S_k^3$
has a Rankin-Selberg integral representation with respect to an
Eisenstein series of Siegel type 
of degree $5$ \cite{He99}, \cite{BH00}, \cite{BH06}.

\section{Basic notation and facts}
In this section we describe the relation between
Siegel modular forms
and automorphic representations.
We use the approach of Langlands
to systematically define the spinor, standard, and $GL(2)$-twisted
spinor L-function.
\\
\\
Let $k,n$ be positive integers and let
$S_k^n=S_k(\Gamma_n)$ be the space of cuspidal Siegel modular forms of weight $k$
with respect to the Siegel modular group $\Gamma_n = Sp(2n)(\Z)$.
Let $ G_{2n}:=GSp(2n)$ be the symplectic group with similitudes and let
$\overline{G}_{2n}:= G/ Z$, where $Z$ is the center of $G$.
In the setting of Siegel modular forms let $F \in S_{k}^{n}$ 
be a Hecke eigenform and $\pi$ the associated 
automorphic representation of $\overline{G}_{2n}(\mathbb{A}_{\Q})$ over $\Q$, where
$\mathbb{A}_{\Q}$ is the adele ring of $\Q$.
Then $\pi$ factors over primes as $\pi = \bigotimes_v'\pi_v$, 
where $\pi_{\infty}$ is a holomorphic discrete series representation
(or {\it limit} of discrete series)
and, for finite $v=p$, $\pi_p$ is a spherical representation of
$Sp(2n)(\Q_p)$. Here $p$ are the prime numbers in $\Q$. 
The local representations are uniquely determined by the {\it Satake parameters } of $F$:
$$\mu_{0,p}^F, \mu_{1,p}^F \cdot
\ldots \cdot \mu_{n,p}^F.$$ 
These Satake parameters are unique up to the action of the Weyl group of $G_{2n}$.
There is a clear correspondence between the eigenvalues of the Hecke
operators in the classical  
setting and the Satake parameters coming from the local representations.
For example let $\lambda_p(F)$ the eigenvalue of $F$ with respect to the Hecke operator $T_p^{(n)}$.
Here $T_p^{(n)}$ is the canonical generalization of the well-known Hecke operator $T_p$
for elliptic modular forms on the upper half-space $\H:=\left\{ z = x + iy \vert \, y>0 \right\}$.
Then by standard normalization we have
\begin{equation}
\lambda_p(F) = \mu_{0,p}^F \left( 1 + \sum_{r=1}^n \prod_{1\leq i_1 \ldots 
\leq i_r} \mu_{i_1,p}^F \cdot \ldots \cdot \mu_{i_r,p}^F\right).
\end{equation}
To fix notation, let $H$ be any linear algebraic reductive group (split) over
a number field $K$. Then
\begin{equation}
\mathcal{A}(H) := 
\left\{ \pi 
\text{ cuspidal automorphic representation of } 
H(\A_{K} )
\right\}.
\end{equation}
Let $H$ be the projective group of symplectic similitudes and $k$ a  positive integer.
Then we denote by $\mathcal{A}(H)_k$ the set of all representations $\pi$ in
$\mathcal{A}(H)$ such that there exists a $F \in S_k^n$ with $\pi = \pi_F$.

Now we can reformulate our lifting problem. This leads to the following
necessary condition. Let 
\begin{itemize}
\item
$\pi^1 \in \mathcal{A}(PGSp(2)(\A))_{k_1}$ 
\item
$\pi^2\in \mathcal{A}(PGSp(4)(\A))_{k_2}$. 
\end{itemize}
Then there has to exist a
$\pi^3\in \mathcal{A}(PGSp(6)(\A))_{k_3}$ such that for all finite primes $p$:
\begin{equation}
\fbox{$\lambda_p(F) = \lambda_p(h) \cdot \lambda_p(G)$}.
\end{equation}
Here $h,G,F$ are related to $\pi^1,\pi^2,\pi^3$ as above.
Now we want to put this observation into the picture of the Langlands program
to apply standard techniques and obtain possible generalizations.

\subsection{Automorphic L-functions}
Langlands' set-up attaches automorphic L-functions 
to reductive algebraic groups in a systematic way and recovers the
classical L-functions as the Hecke L-function, Rankin-Selberg
L-function and Andrianov Spinor L-function. 
Let us recall the main ingredients. 
We fix the following data
related to a linear algebraic group (split) over some number field $K$.

\begin{itemize}
\item
$G(\A_K)$ denote the restricted direct product $\prod_{v} G(K_v)$, here the $K_v$
are the local fields attached to the valuations $v$.
\item
$\!\!\!  \!\!\!\phantom{X}^L G$ the $L$-group of $G$, which is a well-defined subgroup of $GL(n)(\C)$,
$n$ suitable.
\item
$\rho$ a finite dimensional representation of the $L$-group of $G$.
\item
$\pi$ an automorphic representation of $G(\A_F)$, $\pi = \otimes \pi_v$.
\end{itemize}

For example let $G= PGSp(2n)$, then the L-group
coincides in this
special case with the universal covering $Spin(2n+1)(\C)$ of the orthogonal group $S0(2n+1)(\C)$.
We denote by $\rho_n$ and $st_n$ the spin representation 
of dimension $2n$ and the standard imbedding into $GL (2n+1)(\C)$ of the L-group.

Let $\pi \in \mathcal{A}(G)$ with $\pi = \otimes_p \pi_v$, 
then for almost all $v$ the local representations
$\pi_v$ are spherical and unramified and correspond 
to the conjugacy class of a semi-simple element $t_v^{\pi}$ in the L-group. Let $S$
be a finite set, which contains all places at infinity and all finite $v$, which are
ramified.
Then the Langlands L-function attached to this data is defined by
\begin{equation}
L^{(S)}(s,\pi,r):= 
\prod_{v \not\in S} 
\text{det} \left( 
1 - r(t_v^{\pi})  (Nv)^{-s}
\right)^{-1}.
\end{equation}
Here $Nv$ is the order of the residue field related to $v$.
If we work over $\Q$ and $S=\{\infty\}$ we skip the $S$.
This $L$-function is holomorphic with respect to $s \in \C$ for $\text{Re}(s)$ large enough.
Langlands conjectured that the $L$-function can be completed such that
one gets a meromorphic function on the whole $s$-plane with a functional equation.

%

\begin{definition}
Let $n,k \in \N$ and $\pi \in \mathcal{A}(GSp(2n))_k$. Then the {\it spinor} and {\it standard}
L-functions are given by
\begin{eqnarray}
L(s, \pi, \rho_n) & := 
\prod_p 
\left( 
\prod_{k=0}^n 
\prod_{1 \leq i_1 < \ldots < i_k \leq n} 
\left( 1- \mu_0 \mu_{i_1} \ldots \mu_{i_k} \, p^{-s}\right)
\right)^{-1}\\
L(s, \pi, st_n) & := 
\prod_p 
\left( (1-p^{-s} ) \prod_{j=1}^n 
\left( 1- \mu_j \, p^{-s}\right) \, 
\left( 1-\mu_j^{-1} \, p^{-s} \right) 
\right)^{-1}.
\end{eqnarray}
\end{definition}
Let $\pi= \pi_{\infty} \otimes_p \pi_p \in \mathcal{A}\left( \text{GSp}(2n)\right)$. Then $\pi$ satisfies the
Ramanujan-Petersson conjecture at finite places if for all primes $p$ and local finite representations $\pi_p$
the Satake parameters $\mu_0, \mu_1, \ldots, \mu_n$ satisfy
\begin{equation}
\vert\mu_1\vert = \vert\mu_2\vert =  \ldots =  \vert\mu_n\vert = 1.\\
\\
\end{equation}
{\it \bf Two remarks:}\\
a) The spinor and standard L-function converges absolutely and locally uniformly for real part of $s$ large enough.
For example without any assumption one can choose
$\text{Re}(s) > n+1$ in the case of the standard L-function.
\\ 
b) Let $\pi \in \mathcal{A}\left( \text{GSp}(2n)\right)_k$ satisfy the Ramanujan-Petersson conjecture, then the
Euler products converge already for 
\begin{equation}
\text{Re}(s) > \Big( nk - n(n+1)/2\Big)/2 \text{ and }\text{Re}(s) > 1. 
\end{equation}
\ \\
It is well known that the Ramanujan-Petersson 
conjecture is fullfilled in the case of elliptic cusp forms.
In the case $n=2$ the Saito-Kurokawa lifts (CAP-representations) contradict the conjecture.
Nevertheless for all other representations $\pi \in  \mathcal{A}\left( \text{GSp}(4)\right)_k$
the Ramanujan-Petersson conjecture is satisfied \cite{We94}, \cite{We05}. \\
Let $c_1:= (k+1)/2$, $c_2 := k -1/$, and $c_3 := (3k)/2 -2$.
Then we have the sharp bounds $\text{Re}(s) > c_n$ for the spinor L-functions in the cases $n=1,2,3$, 
if the conjecture is satisfied.

\subsection{Conjectures of Andrianov and Deligne}\label{sectionconjectures}
Let $ \pi \in \mathcal{A}\left( \text{GSp}(6)\right)_k$. 
Then there are several open conjectures on the analytic and
arithmetic properties of the spinor L-function attached to $\pi$. 
\\
\\
{\bf Functional equation [{\it Conjecture of Andrianov \cite{An74}, \cite{Pa07} }]}\\
The completed spinor L-function $\widehat{L}(s, \pi, \rho_3)$ attached to
an automorphic representation $\pi \in \mathcal{A}\left( \text{GSp}(6)\right)_k$ has a
meromorphic continuation to the whole complex plane and satisfies the functional equation
$$
s \mapsto 3k -5 -s.$$
The completion is given by
\begin{equation}
\widehat{L}(s, \pi, \rho_3) := L_{\infty}(s,\pi) \, L(s, \pi, \rho_3),
\end{equation}
where $\Gamma_{\C}(s) := 2 (2 \pi)^{-s} \Gamma(s)$ and
\begin{equation}
L_{\infty}(s, \pi) :=\Gamma_{\C}(s) \, \prod_{m=1}^3 \, \Gamma_{\C}(s-k+m).
\end{equation}
\
\\
\\
{\bf Arithmetic of critical values [{\it Conjecture of 
Deligne \cite{De79}, Panchiskin \cite{Pa07}, Yoshida \cite{Yo01}}]}\\
Let $\mathcal{M}$ be a (hypothetical) motive 
attached to $\pi \in \mathcal{A}\left(\text{GSp}(2n)\right)_k$ with
$L$-function $L(s,\pi,\rho_n)$. 
Hence $\mathcal{M}$ is a motive over $\Q$ with coefficients in an algebraic number field $E$.
Let $R:= E  \otimes_{\Q} \C$ with $E \subset R$.
Then the L-function $L(s, \mathcal{M})$ of the motive takes values in $R$. 
Let $m \in \Z$ be a critical value,
then Deligne's conjecture predicts that
\begin{equation}
\frac{L(m, \mathcal{M})}{(2 \pi \, i)^{d^{\pm} m} \, c^{\pm}(\mathcal{M}) } \in E.
\end{equation}
Here $\pm$ has the same sign as $(-1)^m$ and $c^{+}(\mathcal{M}), 
c^{-}(\mathcal{M})$ are Deligne's periods.
The natural integers $d^{+}$ and $d^{-}$ are the dimension of the 
$+$ and $-$ eigenspace of the Betti realization of $\mathcal{M}$.
Let $n=3$ then Panchiskin \cite{Pa07} has determined explicit the 
interval of the involved critical values. They are given by the
positive integers $k, k+1, \ldots, 2k-5$.
Since we prove that the period does not depend on
the parity of the special values we ommit the discussion on
the meaning of this fact in this paper. We only want to note that this
somehow seems to reflect the degeneration of the lifted Siegel modular form of
degree $3$ on the level of periods. Of course one could speculate and
ask if this already determine lifts in the case of Siegel modular forms of degree $3$.

\section{Candidates for the lifting - first properties}

\begin{definition}
Let $h \in S_{k_1}^{n_1}$ and $G \in S_{k_2}^{n_2}$ be Hecke eigenforms.
Let $\pi^1$ and $\pi^2$ be the associated automorphic representations and let $r_1$ and $r_2$
be two complex respresentations of dimension $m_1$ and $m_2$ of the corresponding $L$-groups.
Then 
\begin{equation}
L(s, \pi^1 \otimes \pi^2, r_1 \otimes r_2):=
\prod_p \left( 1_{m_1 + m_2} - r_1(t_p^{\pi^1}) \otimes r_2(t_p^{\pi^2}) \, p^{-s}\right)^{-1}.
\end{equation}
If there is any automorphic representation $\pi^3$ (and complex representation $r_3$ of the related $L$-group)
such that
\begin{equation}
L(s,\pi^3,r_3)=L(s, \pi^1 \otimes \pi^2, r_1 \otimes r_2),
\end{equation}
then we put $\pi^3 = \pi^1 \boxtimes \pi^2$ to indicate the modularity of the tensor product.
\end{definition}
Already in the work of Ramakrishnan \cite{Ra00} it was a non-trivial task to show
that the possible lifting is forced to be cuspidal. Now since our groups have higher rank
one can expect that this is the same case here also. 
In contrast to the $GL_n$ case the Ramanujan-Petersson conjecture fails in general.
Let $M_k^n$ be the space of Siegel modular forms of degree $n$, weight $k$ with respect to
the Siegel modular group $\Gamma_n$.

\begin{theorem}
Let $\pi= \pi(F)$ be a holomorphic automorphic representation of $\text{GSp}(6)(\mathbb{A})$
associated to a Hecke eigenform $F$ in $M_k^3$.
Assume that $\pi= \pi^1 \boxtimes \pi^2$ is the lifting of the two cuspidal automorphic representations
$\pi^1 \in \mathcal{A}\left(GL(2)\right)_{k_1}$ 
and $\pi^2\in \mathcal{A}\left(GSp(4)\right)_{k_2}$ 
of even weights $k_1,k_2$. Then $\pi$ is cuspidal.
\end{theorem}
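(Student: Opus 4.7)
The plan is to assume the contrary and derive a contradiction by confronting the Klingen--Eisenstein description of a hypothetical non-cuspidal $F$ with the Satake-parameter identification forced by the torus homomorphism $\Theta$ of the excerpt. Suppose $F \in M_k^3 \setminus S_k^3$ is a Hecke eigenform associated to $\pi = \pi^1 \boxtimes \pi^2$. The classical Klingen decomposition of $M_k^3$ forces $F$, as a Hecke eigenform, to lie in the Klingen--Eisenstein orbit of a cuspidal Hecke eigenform of degree $0$, $1$, or $2$. In each of the three cases one reads off the $p$-Satake tuple $(\mu_0^F,\mu_1^F,\mu_2^F,\mu_3^F)$: it consists of the Satake parameters of the source form, padded (up to the Weyl group and similitude normalization) with the specific integer $p$-powers $p^{k-1}, p^{k-2}, p^{k-3}$ supplied by the Eisenstein construction.

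Second, I would confront this with the identification
\[
(\mu_0^F,\,\mu_1^F,\,\mu_2^F,\,\mu_3^F)\;=\;(\alpha_0\beta_0,\,\beta_1,\,\beta_2,\,\alpha_1)
\]
coming from $\Theta$, where $(\alpha_0,\alpha_1)$ are the Satake parameters of the cuspidal elliptic eigenform $h$ of weight $k_1$ and $(\beta_0,\beta_1,\beta_2)$ those of the cuspidal Siegel eigenform $G$ of weight $k_2$. By Deligne, $|\alpha_1| = p^{(k_1-1)/2}$; by Weissauer (the remark in Section~$3$), in the non-CAP case $|\beta_1|=|\beta_2|=p^{(k_2-3/2)/2}$. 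In each Klingen template at least one of $\mu_1^F, \mu_2^F, \mu_3^F$ must take the value of a fixed integer $p$-power $p^{k-j}$ with $j\in\{1,2,3\}$. A direct comparison of these archimedean sizes against $p^{(k_1-1)/2}$ and $p^{(k_2-3/2)/2}$ eliminates each admissible $j$ and hence each Klingen configuration, forcing $F \in S_k^3$.

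The principal obstacle is the Saito--Kurokawa CAP case for $\pi^2$, where Weissauer's bound breaks and two of the $\beta_j$ degenerate to the form $\beta_0\, p^{\pm 1/2}$. Here I would invoke the Shimura-correspondence description of the CAP Satake parameters, so that one of $\beta_1,\beta_2$ becomes $p^{1/2}\gamma$ and the other $p^{-1/2}\gamma$ for $\gamma$ a Satake parameter of an elliptic cuspidal eigenform of half the weight. The $\alpha_1$-slot in each Klingen pattern must still absorb a parameter of absolute value $p^{(k_1-1)/2}$ attached to a non-CAP elliptic eigenform, and a short case check over the three Klingen types shows the growth mismatch against the $p^{k-j}$-slots persists, completing the argument.
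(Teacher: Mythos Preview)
Your overall architecture matches the paper's: assume $F$ is not cuspidal, invoke the Klingen decomposition so that $F$ is a Siegel or Klingen Eisenstein series attached to a degree $0$, $1$, or $2$ cusp form, write down the resulting Satake tuple (the source parameters padded by the fixed powers $p^{k-1},p^{k-2},p^{k-3}$), and compare with the tuple $(\alpha_0\beta_0,\beta_1,\beta_2,\alpha_1)$ forced by $\Theta$. The contradiction in each case is indeed a size mismatch of Satake parameters.

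There is, however, a genuine error in your execution. In the normalization fixed by the paper (and by the torus map you quote), one has $\alpha_0^2\alpha_1=p^{k_1-1}$ and $\beta_0^2\beta_1\beta_2=p^{2k_2-3}$; Deligne then gives $|\alpha_1|=1$, and Weissauer (non-CAP) gives $|\beta_1|=|\beta_2|=1$. Your values $|\alpha_1|=p^{(k_1-1)/2}$ and $|\beta_j|=p^{(k_2-3/2)/2}$ are the absolute values of $\alpha_0$ and (roughly) of $\beta_0$, not of $\alpha_1,\beta_1,\beta_2$; with them, the ``direct comparison of archimedean sizes against $p^{k-j}$'' you announce does not say what you need without an additional (unstated) parity argument, and your Saito--Kurokawa description (``$\gamma$ a Satake parameter of an elliptic eigenform of half the weight'') is also off. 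Once the normalization is corrected, the comparison does go through, but not as written.

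The paper's proof differs from yours in the key lemma it invokes. Instead of Deligne and Weissauer separately, it uses the Chai--Faltings theorem: for any cuspidal Hecke eigenform in $S_k^n$ with $k>n$, some Weyl representative satisfies $|\mu_1\cdots\mu_n|=1$. Applied to $h$ and to $G$ this yields $|\alpha_1|=1$ and $|\beta_1\beta_2|=1$, hence $|\mu_1\mu_2\mu_3|=1$ on the lifting side \emph{uniformly}, with no CAP/non-CAP split. One then checks that each Eisenstein template forces $|\mu_1\mu_2\mu_3|=p^{\pm(k-3)}$, $p^{\pm 1}$ or $p^{\pm(2k-5)}$, etc., never $1$. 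This is cleaner than your route: it sidesteps the Saito--Kurokawa case entirely, whereas your argument has to treat it by hand (and your sketch of that case is the weakest part of the proposal). Conversely, your use of the full Ramanujan bounds, once normalized correctly, gives slightly more information about the individual $|\mu_i|$ and makes the degree-$0$ and degree-$1$ Klingen cases marginally more transparent.
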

In the case of the lifting of $\text{GL}(2) \times \text{GL}(2)$ to $ \text{GL}(4) $
this was called by Ramakrishnan {\it cuspidality criterion}, and employed in \cite{Ra00}, section 3.2.

\begin{proof}
Let $\pi^1 \in \mathcal{A}\left(GL(2)\right)_{k_1}$ 
and $\pi^2 \in \mathcal{A}\left(GSp(4)\right)_{k_2}$ 
be two automorphic representations with even weights $k_1,k_2$. Let us assume that
an automorphic representation $\pi$ exists coming 
from some Siegel eigenform $F$ of degree $3$ of weight $k_3$, 
such that $$L(s,\pi,\rho_3) = L(s, \pi^1 \otimes \pi^2, \rho_1 \otimes \rho_2).$$
Let us recall a result of Chai and Faltings [\cite{CF90}, p. 107].

\begin{proposition}[Chai-Faltings]
Let $F \in S_k^n$ be a Hecke eigenform. Let $k>n$. Then we have for every finite prime $p$, that 
there exists at least one element in the Weyl group
orbit of the $p$-Satake parameters $\mu_0, \mu_1, \mu_2, \ldots, \mu_n$ such that
\begin{equation}
\vert \mu_1 \cdot \mu_2 \cdot \ldots \cdot \mu_n \vert =1.
\end{equation}
Here we have the normalization $\mu_0^2 \mu_1 \ldots \mu_n = p^{nk - n(n+1)/2}$.
\end{proposition}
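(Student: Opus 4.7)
The strategy is to derive this Satake estimate from the existence of $\ell$-adic Galois representations attached to Siegel cuspidal Hecke eigenforms, combined with Deligne's purity theorem. My plan is to view $F \in S_k^n$ geometrically, as contributing to the cohomology of the Siegel modular variety $\mathcal{A}_n$, and then read off the desired bound for $|\mu_1 \cdots \mu_n|$ from the weights of Frobenius.

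First I would invoke Chai--Faltings' construction of a smooth toroidal compactification $\overline{\mathcal{A}}_n$ of $\mathcal{A}_n$, equipped with canonical extensions of the automorphic local systems $\mathcal{V}_{k,n}$ attached to the representation of $\mathrm{GSp}(2n)$ of highest weight determined by $k$. Because $k > n$, the eigenform $F$ lies in the regular part of the weight cone, so the $F$-isotypic component of the interior $\ell$-adic cohomology $H^\bullet_!(\mathcal{A}_{n,\overline{\mathbb{Q}}}, \mathcal{V}_{k,n})$ is non-trivial and carries commuting actions of the Galois group $\mathrm{Gal}(\overline{\mathbb{Q}}/\mathbb{Q})$ and of the unramified Hecke algebra. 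On this component, the Hecke eigenvalues of $F$ determine, and are determined by, the characteristic polynomial of $\mathrm{Frob}_p$ via the spin representation $\rho_n$ of ${}^L\mathrm{PGSp}(2n)$; this is the Eichler--Shimura--Chai--Faltings relation.

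Next I would apply Deligne's purity theorem (Weil~II) to $\overline{\mathcal{A}}_n$: the Frobenius eigenvalues occurring in $H^\bullet_!(\mathcal{A}_{n,\overline{\mathbb{Q}}}, \mathcal{V}_{k,n})$ are algebraic numbers whose complex absolute values are of the form $p^{w/2}$ with $w$ matching the motivic weight attached to the local system. A direct computation of the weight of $\mathcal{V}_{k,n}$, or equivalently of the central character of the associated automorphic representation, yields $w = nk - n(n+1)/2$, which is exactly the integer appearing in the normalization $\mu_0^2 \mu_1 \cdots \mu_n = p^{nk-n(n+1)/2}$. Thus at every prime $p$ of good reduction (hence, by the smoothness of the compactification and standard deformation arguments, at every finite prime), at least one of the Frobenius eigenvalues coming from $\rho_n$ has absolute value $p^{w/2}$.

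To conclude I would translate this into the Weyl orbit language. The $2^n$ spin-representation eigenvalues take the form $\mu_0 \prod_{i\in S} \mu_i$ for $S \subseteq \{1,\dots,n\}$, and the Weyl group of $\mathrm{GSp}(2n)$ acts transitively on these via $\mu_0 \mapsto \mu_0 \prod_{i\in S}\mu_i$ together with $\mu_i \mapsto \mu_i^{-1}$ for $i\in S$. Picking the Weyl representative that identifies $\mu_0$ with the pure Frobenius eigenvalue produced in the previous step gives $|\mu_0|^2 = p^{nk-n(n+1)/2}$, and the similitude relation then forces $|\mu_1\cdots \mu_n| = 1$, as claimed. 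The main obstacle is the reliable identification of Frobenius characteristic polynomials with spin Satake parameters for general $n$; this is delicate for $n \geq 3$, where full local--global compatibility is not known, but the weak ``some Weyl representative'' form of the estimate only uses that one spin Frobenius eigenvalue appears in the interior cohomology with the expected weight, which is exactly what Chai--Faltings establish without appealing to temperedness (relevant since non-tempered Saito--Kurokawa-type contributions must be accommodated).
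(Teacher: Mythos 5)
The paper does not prove this proposition; it is quoted verbatim as a result of Chai and Faltings, with the reference \cite{CF90}, p.~107, and is used as a black box in the cuspidality argument. There is therefore no proof in the paper for you to compare against — what you have written is a reconstruction of the cited result, not an alternative to an argument the author gives.

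Your overall strategy (interior $\ell$-adic cohomology of $\mathcal{A}_n$ with automorphic coefficients, Deligne's Weil~II purity in the middle degree $n(n+1)/2$, and then the similitude relation $\mu_0^2\mu_1\cdots\mu_n=p^{nk-n(n+1)/2}$ to convert ``some spin Frobenius eigenvalue has absolute value $p^{w/2}$'' into ``some Weyl representative has $|\mu_1\cdots\mu_n|=1$'') is the right shape, and the final reduction via the simply transitive $(\mathbb{Z}/2)^n$-action on the $2^n$ spin weights is carried out correctly. Two points need repair, however. First, for scalar weight $k$ the coefficient system $\mathcal{V}_{k,n}$ has highest weight $(k-n-1,\ldots,k-n-1)$, with all entries equal; this is \emph{never} in the regular cone. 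What $k>n$ actually buys is that these entries are nonnegative (so the local system exists) and that $\pi_\infty$ is a genuine holomorphic discrete series with regular infinitesimal character $(k-1,\ldots,k-n)$ — not that the highest weight itself is regular, and your phrase ``regular part of the weight cone'' should be replaced accordingly. Second — and this is the crux — purity tells you only that the Frobenius eigenvalues in the $F$-isotypic middle-degree interior cohomology have absolute value $p^{w/2}$; you still need the Eichler--Shimura congruence relation to know that at least one of those Frobenius eigenvalues is a spin Satake parameter $\mu_0\prod_{i\in S}\mu_i$. For $n\geq 3$ this congruence is itself a nontrivial theorem and is essentially the substantive content of the Chai--Faltings result; your sketch acknowledges this but ultimately defers it back to the reference. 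That is a reasonable move when reconstructing a cited theorem, but it means your outline is not self-contained — the gap in the proposal sits exactly at the congruence relation, not at the purity or combinatorial steps.
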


Let $\alpha_0, \alpha_1$ and $\beta_0,\beta_1,\beta_2$ 
be the Satake parameters of $\pi_p'$ and $\pi_p''$.
Here we have choosen the normalization 
$$\alpha_0^2 \alpha_1 = p^{k_1-1}, \quad 
\quad \beta_0^2 \beta_1 \beta_2 = p^{2k_2 -3}.$$
If we assume that $\pi$ is not cuspidal, then it follows from the Darstellungssatz of Klingen and \cite{Ha81}
that $F$
is a 
Siegel Eisenstein series
or a Klingen Eisenstein series. The case $k=4$ 
is treated as the case of Siegel type Eisenstein series,
although we have Hecke summmation.
Let $\mu_0, \mu_1,\mu_2,\mu_3$ be the Satake parameter of 
$\pi_p$ with $\mu_0^2 \mu_1 \mu_2 \mu_3 = p^{3k-6}$.
From the lifting assumption we can deduce that
\begin{equation}\label{lift_parameters}
\mu_0 = \alpha_0 \beta_0, \, \mu_1 = \beta_1,\, \mu_2=\beta_2, \text{ and } \mu_3 = \alpha_1.
\end{equation}
Hence at least one of the Satake parameter $\mu_1, \mu_2,\mu_3$ of $\pi_p$ has the absolute value one.
We now proceed case by case.
\\
\\
a) 
Let first $F$ be a Siegel Eisenstein series. We can assume that the weight $k$ is largen then $3$.
It is well known that the Satake parameters $\mu_1,\mu_2,\mu_3$ have the values
$p^{k-3},\, p^{k-2}, p^{k-1}$. This is unique up to the 
action of the Weyl group of the symplectic group.
Since none of the values has absolute value one we have a contradiction.
\\
b)
Next we assume that $F$ is a Klingen Eisenstein series attached to the cusp form $H \in S_k^2$.
Here $H$ is also a Hecke eigenform. Let $\gamma_0,\gamma_1,\gamma_2$ be the $p$-th Satake parameter of $H$.
Then it follows from a Theorem of Chai and Faltings, that these 
parameters can be choosen in such a way, that
$\vert \gamma_1 \gamma_2 \vert =1$. Then the Satake parameters of $F$ are 
given by $\mu_1 = \gamma_1, \, \mu_2 = \gamma_2, \, \mu_3 = p^{k-3}$. From (\ref{lift_parameters})
we know that (up to the action of the Weyl group) at least one of the 
Satake parameters $\mu_1,\mu_2,\mu_2$ has
absolute values $1$. But if this would be the case for the Klingen Eisenstein series attached to $H$, then
it would follow together with the Theorem of 
Chai and Faltings that $\vert \mu_1 \vert = \vert \mu_2 \vert$. 
Hence $\vert \mu_1 \, \mu_2 \mu_3\vert$ is always equal to
$p^{k-3}$ or $p^{3-k}$, hence a contradiction to the Theorem of Chai and Faltings in degree $3$.
\\
c) Now let $F$ be a Klingen Eisenstein series attached to 
a Hecke eigenform $f \in S_k$ with $p$-th Satake parameters
$\gamma_0, \gamma_1$, where $\vert \gamma_1 \vert =1$. 
Then the Satake parameters $\mu_1, \mu_2, \mu_3$ of $F$ can be choosen by
$$
\mu_1 = \gamma_1, \,\, \mu_2 = p^{k-2}, \,\, \mu_3 = p^{k-3}.
$$
But this is a contradiction.
\end{proof}
There is another possibility to prove this result by analytic methods instead of
our algebraic proof. This is done by applying the method given in
\cite{Ha81} to the properies of the standard L-function.

\section{Motivic viewpoint}

In this section we consider the lifting problem from the motivic point of view.
We determine the possible weights for the candidates and show that this is compatible
with our previous results obtained in \cite{He07e}.

We examine the possibility of the decomposition of a motive $\cM$ over $\Q$ into
the tensor product of certain motives $\cM_1$ and $\cM_2$, which are attached
to automorphic forms:
\begin{equation}
\cM = \cM_1 \otimes_{\Q} \cM_2.
\end{equation}

\subsection{Motivic decompositions}
In 1999 we first considered the opposite question, 
we started with two {\it constructed} motives $\cM_1, \cM_2$ 
and had been interested on critical 
values \cite{De79} of the motive $\cM_1 \otimes_{\Q} \cM_2$
and the explicit formula for Deligne's conjecture on the 
arithmetic of the L-function evaluated at this point.
This was a natural question after finding a new integral representation of the
$\text{GL}(2)$-twisted Andrianov spinor L-function \cite{He99}.
This motivated Yoshida to work out the formula and a general procedure
which describes how invariants and periods of motives $\cM_1, \cM_2, \ldots, \cM_r$
behave by standard algebraic operations (see also Yoshida \cite{Yo01}, section 4, page 1193).
Finally main parts of Deligne's conjecture to the L-function from above had been proven
by B\"ocherer and Heim \cite{BH00}, \cite{BH06}.
Since $\cM_1$ and $\cM_2$ had been motives attached to automorphic forms 
it is an interesting problem to study the tensor product and
ask if the new motives is a motive of an automorphic form - modularity of
motives. The question if this motivic approach of the lifting considered in this
paper leads to the same results, which we obtain by concrete analytic methods,
was raised by Harder during the Japanese-German conference 
held at the MPI Mathematik in Bonn in February 2008.


We calculate the Hodge numbers to delete all {\it forbidden lifts}.
Moreover we can predict with this approach 
the functional equation of the spinor L-function attached to a Siegel modular form
of degree $3$ and the arithmetic of the critical values.
\\
\\ 
Let $k,n$ be positive integers. 
We assume that $k>n$, since we 
are mainly interested in
$n=1,2,3$. Let $\pi \in \mathcal{A}\left( \text{GSp}(2n)\right)_k$ be an
automorphic representation with 
holomorphic discrete series at infinity.

We denote by $E_{\pi}$ the totally real number field generated 
by the Hecke eigenvalues of $\pi_f := \otimes_p \, \pi_p$.
Now it is well known that we can pick a fix vector $F = F(\pi)\in S_k^n$ attached to the
representation $\pi$, such that the field generated by the Fourier coefficients is contained
in $E_{\pi}$. We denote such $F$ normalized. In the case $n=1$ this can easily obtained by choosing
$F$ to be primitive. For Siegel modular form there is no such explicit property available.

\subsection{Determination of the possible Hodge types}
Let $\cM$ be a motive over $\Q$ with coefficients in a totally real number field $E$.
Let $H_B(\cM)$ be the Betti realization of $\cM$. 
Then $H_B(\cM)$ is a vector space over $E$ and its finite
dimension $d$ is called the rank of the motive $\cM$. 
The Hodge decomposition is given by
\begin{equation}
\text{H}_\text{B}(\cM) \otimes_{\Q} \, \C = 
\bigoplus_{p,q \, \in \, \Z} \text{H}^{p,q}(\cM).
\end{equation}
Then $\text{H}^{p,q}(\cM)$ are free $R:= E \otimes_{\Q} \C$ modules. 
If $\text{H}^{p,q}(\cM) = \{ 0\}$ whenever $p + q \neq w$.
Then we denote $\cM$ a pure motive and $w$ the (pure) weight.
In the following let $\mathcal{M} =\mathcal{M}(F)$ be the motive attached
to a cuspidal Siegel Hecke eigenform with spinor representation of the
L-group. Let $h \in S_k$ then the Hodge type of $\mathcal{M}(h)$ is given by
\begin{equation}
(0,k-1) + (k-1,0).
\end{equation}
Let $G \in S_l^2$ then the Hodge type of $\mathcal{M}(G)$ is given by
\begin{equation}
(0,2l-3) + (l-2,l-1)+ (l-1,l-2) + (2l-3,0).
\end{equation}
By employing the K\"unneth formula we obtain for the tensor product
of $\mathcal{M}(h) \otimes \mathcal{M}(G)$ the Hodge type
\begin{eqnarray*}
& &(0,2l+k-4) + (l-2,l+k-2)+ (l-1,l+k-3)+(k-1,2l-3)\\
& & +(k+l-3,l-1)+(2l-3,k-1)+(l+k-2,l-2)+(k+2l-4,0).
\end{eqnarray*}
Assuming that this tensor product is isomorphic to the motive $\mathcal{M}(F)$
attached to $F \in S_K^3$, which has the Hodge type
\begin{equation*}
(0,3K-6) + ( K-3,2K-3) + ( K-2,2K-4) + (K-1,2K-5) + \text{ symmetric terms},
\end{equation*}
then we obtain $k=K-2,l=K$. This gives the desired result.
\remark
We are very much indepted to Alexei Panchiskin, for sharing some of his
ideas with us.
\remark
Its also possible to compare the Hodge numbers in the setting of
vector-valued modular forms (see \cite{BG07}, for the relevant Hodge decompositions).


\section{Analytic properties of Spinor L-functions}
In this section we prove the meromorphic continuation 
of the spinor L-function attached to
the lifting $\pi = \pi' \boxtimes \pi''$. 
We complete the L-function at infinity and
obtain the functional equation. This result is compatible 
with the conjecture of Andrianov and Panchiskin.


\begin{theorem}
Let $k_1,k_2$ be even positive integers.
Let $\pi^1 \in \mathcal{A}(GL(2))_{k_1}$ and $\pi^{2} \in 
\mathcal{A}(GSp(4))_{k_2}$ 
be two automorphic representations. Let $\pi := \pi^1 
\boxtimes \pi^2\in \mathcal{A}(GSp(6))_{k}$ be modular.
Then $k_1=k-2$ and $k_2=k$. Let $\rho_3$ be the spinor 
representation of $Spin(7)$. 
Then the L-function $L(s, \pi, \rho_3)$ has a meromorphic continuation to the 
whole complex plane. 
Let the first Fourier-Jacobi coefficient of $\pi^2$ be non-trivial. Then
the completed L-function

\begin{equation}
\widehat{L}(s,\pi,\rho_3) =  L_{\infty}(s, \pi,\rho_3) \, L(s,\pi,\rho_3)  ,
\end{equation}
satisfies the functional equation $s \mapsto 3k-5-s$. Here
$$
L_{\infty}(s,\pi) := \Gamma_{\C}(s) \, \prod_{m=1}^3 \, \Gamma_{\C}(s-k+m).$$
\end{theorem}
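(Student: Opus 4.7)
The strategy is to reduce the theorem to known analytic results for the $\mathrm{GL}(2)\times\mathrm{GSp}(4)$ spinor L-function, combined with the motivic and representation-theoretic preparations already in place.

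\textbf{Step 1 (Weights).} The equality $k_1=k-2$ and $k_2=k$ has been deduced in the motivic section: comparing the Hodge decomposition of $\mathcal{M}(h)\otimes_{\Q}\mathcal{M}(G)$ to that of $\mathcal{M}(F)$ forces $k=K-2$, $l=K$ in the notation of that section. Since a modular lift $\pi^1\boxtimes\pi^2$ must realize the same Hodge structure, only the pair $(k-2,k)$ can support the lift, and all other formally possible weights are excluded.

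\textbf{Step 2 (Identification of L-functions).} By the definition of $\boxtimes$ together with the torus homomorphism $\Theta$ in (1.10), inserting the Satake parameters $(\alpha_0\beta_0,\beta_1,\beta_2,\alpha_1)$ into the spin representation $\rho_3$ gives a product over primes that coincides, factor by factor, with the Langlands convolution, so
\begin{equation*}
L(s,\pi,\rho_3)\;=\;L(s,\pi^1\otimes\pi^2,\rho_1\otimes\rho_2).
\end{equation*}
Thus the analytic problem reduces to the $\mathrm{GL}(2)\times\mathrm{GSp}(4)$ spinor L-function of the pair $(\pi^1,\pi^2)$.

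\textbf{Step 3 (Integral representation).} I would invoke the Rankin-Selberg integral representation of Furusawa \cite{Fu93}, Heim \cite{He99}, and B\"ocherer-Heim \cite{BH00}, \cite{BH06}, which expresses the completed convolution as a pullback of a Siegel-type Eisenstein series $E(Z,s)$ of degree $5$ along an embedding $\mathfrak{H}_1\times\mathfrak{H}_2\hookrightarrow\mathfrak{H}_5$, integrated against $h\otimes G$. The non-triviality of the first Fourier-Jacobi coefficient of $\pi^2$ is precisely the hypothesis that prevents the underlying Bessel period from vanishing identically, so the integral represents $L(s,\pi^1\otimes\pi^2,\rho_1\otimes\rho_2)$ up to an explicit archimedean factor.

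\textbf{Step 4 (Meromorphic continuation and functional equation).} The meromorphic continuation of $L(s,\pi,\rho_3)$ to all of $\C$ now follows from that of $E(Z,s)$. Transporting the functional equation of the Siegel Eisenstein series of degree $5$ through the unfolding, and inserting the archimedean computation carried out in \cite{He99}, \cite{BH06}, yields the symmetry $s\mapsto 3k-5-s$ for the completion
\begin{equation*}
\widehat{L}(s,\pi,\rho_3)=L_\infty(s,\pi,\rho_3)\,L(s,\pi,\rho_3), \qquad L_\infty(s,\pi)=\Gamma_{\C}(s)\prod_{m=1}^3\Gamma_{\C}(s-k+m),
\end{equation*}
with the four $\Gamma_{\C}$-factors corresponding bijectively to the eight Hodge summands of $\mathcal{M}(h)\otimes\mathcal{M}(G)$ determined in Section 4.

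\textbf{Main obstacle.} The delicate point is the archimedean analysis: one must pin down the correct normalization of the pulled-back Eisenstein series so that the center of symmetry lands at $s=(3k-5)/2$, and match the four $\Gamma_{\C}$-factors with the weight-shifts coming from the Fourier-Jacobi expansion of $\pi^2$. The Fourier-Jacobi hypothesis enters not only globally (to ensure non-vanishing of the period) but archimedeanly (to guarantee the local integral at infinity is non-zero so that the full functional equation survives the unfolding intact).
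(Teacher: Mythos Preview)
Your proposal is correct and follows essentially the same route as the paper: identify $L(s,\pi,\rho_3)$ with the $\mathrm{GL}(2)\times\mathrm{GSp}(4)$ convolution via the Satake parameters, then import the meromorphic continuation and the functional equation $s\mapsto k_1+2k_2-3-s$ from the Rankin--Selberg integral involving the degree-$5$ Siegel Eisenstein series, specializing to $k_1=k-2$, $k_2=k$. The only minor discrepancies are that the paper separately invokes the Langlands--Shahidi method (via a Levi subgroup of $F_4$, \cite{AS01}) for the meromorphic continuation before turning to the integral representation for the functional equation, and that the degree-$5$ pullback construction is specifically that of \cite{He99}, \cite{BH00} rather than Furusawa \cite{Fu93}, whose integral uses Bessel models rather than a pullback.
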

\begin{proof}
Let $\alpha_0,\alpha_1$ be the Satake parameters of $\pi_p'$ and $\beta_0,\beta_1,\beta_2$
be the Satake parameters of $\pi_p^2$. They correspond to the semi-simple elemens
(conjugacy classes) $t(\pi_p^1)$ and $t(\pi_p^2)$ in the related L-groups.
Let $\rho_1$ and $\rho_2$ be the spinor representations of $SL(2)(\C)$ and $Spin(5)(\C)$ then the spinor
L-function of $\pi$ is given by
\begin{equation}
L(s, \pi,\rho_3) = \prod_p \Big\{   \text{det} 
\left( 1_8 - \rho_1\left(t(\pi_p^1)\right) \otimes 
\rho_2\left(t(\pi_p^2)\right) \, p^{-s}\right) \Big\}^{-1}.
\end{equation}
Then the local factors $L_p(X,\pi_p,\rho_3)$ are:
\begin{eqnarray*}
& &( 1- \alpha_0 \beta_0 X)
( 1- \alpha_0 \beta_0 \beta_1 X)
( 1- \alpha_0 \beta_0 \beta_2 X)
( 1- \alpha_0 \beta_0 \beta_1 \beta_2 X)\\
& &( 1- \alpha_0 \alpha_1  \beta_0 X)
( 1- \alpha_0 \alpha_1  \beta_0 \beta_1 X)
( 1- \alpha_0 \alpha_1  \beta_0 \beta_2 X)
( 1- \alpha_0 \alpha_1  \beta_0 \beta_1 \beta_2 X).
\end{eqnarray*}
Since $\pi$ is associated with a Siegel modular form with trivial central character
one can employ the Langlands-Shahidi method to obtain the meromorphic continuation of 
$L(s,\pi,\rho_3)$. This works since in this case the problem can be reduced to the theory of
Euler products applied to a Levi subgroup of the exceptional group of type $F_4$.
This has been demonstrated in \cite{AS01}.
To get the functional equation one has to use a different method.
Surprisingly there also exists a Rankin-Selberg integral which involves the L-function
$L(s, \pi' \otimes \pi'',\rho_1 \otimes \rho_2)$ (if we assume the technical restriction $\pi''$ primitive)
for all even weights $k_1,k_2$. This has been discovered in \cite{He99} and generalized in \cite{BH00}.
Once such an integral representation has been found one can apply advanced techniques to
get the desired properties of the L-function. In this case the functional equation of an Eisenstein series
of Siegel type of degree $5$ leads to the meromorphic continuation and functional equation of the
L-function $L(s,\pi, \rho_3)$ we are interested in.
Let $h \in S_{k_1}$ and $G \in S_{k_2}^2$ be Hecke eigenforms with
($ 0 < k_1 \leq k_2$) even integers. For technical reasons we assume that the first Fourier-Jacobi coefficient of $G$
is non-trivial. Then the function
\begin{eqnarray*}
\mathcal{L}(s) & := &
2^{-3}\,\,
\left( 2 \pi \right)^{4-2k_2-k_1} \, 
\Gamma_{\C}(s) \,\, \Gamma_{\C}(s-k_2+1) \,\, \Gamma_{\C}(s-k_2+2) \,\,\Gamma_{\C}(s-k_1+1) 
\\ & &
\prod_{p} L_p\left( p^{-s}, \pi_h \otimes \pi_G,\rho_1 \otimes \rho_2\right)
\end{eqnarray*}
has a meromorphic continuation to the whole complex plane and satisfies the functional equation
\begin{equation}
s \mapsto k_1 + 2 k_2 -3 -s.
\end{equation}
Let $k:=k_1+2 = k_2=k_3$. Then
\begin{equation}
\Gamma_{\C}(s) \, \prod_{m=1}^3 \, \Gamma_{\C}(s-k+m) \,\, L(s, \pi' \times \pi'', \rho_1 \otimes \rho_2).
\end{equation}
has the functional equation $s \mapsto 3k - 5-s$. Hence the theorem is proven.
\end{proof}

\section{On Deligne's conjecture}
Let $F_{\pi} \in S_k^n$ be a Hecke eigenform attached to $\pi \in \mathcal{A}(GSp(2n)(\A)_k$.
If $k>2n$ be even then $F_{\pi}$ can be normalized such that the Fourier coefficients
of $F_{\pi}$ are contained in the totally real number field $E$ generated by
eigenvalues. Here we are interested the cases $n=1,2,3$.
For $n=1$ this is obtained by choosing the eigenform to be primitive, i.e 
the first Fourier coefficient
$a_g(1)=1$. For Siegel modular forms there is 
no such choice. Hence we make the assumption that the first Fourier coefficient 
in degree $2$ is non-trivial and denote such forms also primitive.
Let further $\parallel \! F_{\pi} \! \parallel$ denote the norm of $F_{\pi}$,
related to the Petersson scalar product. If for $\pi \in \mathcal{A}(GSp(4))_k$ any
primitive $F$ exists we denote $\pi$ primitive.
Finally using all the observations and results of this paper we obtain
our main result:


\begin{theorem}
Let $\widetilde{\pi}= \pi' \boxtimes \pi'' \in \mathcal{A}(\text{GSp}(6))_k$ for $\pi' \in \mathcal{A}(\text{GL}(2))_{k-2}$ 
and $\pi'' \in \mathcal{A}(\text{GSp}(4))_{k}$ primitive. 
Let $F_{\widetilde{\pi}} \in S_k^3$ be normalized.
Then the critical integers of the spinor L-function of $F_{\widetilde{\pi}}$ are
exactly $m=k,k+1,\ldots, 2k-5$.
Moreover there exists a positive $\Omega \in \R$ such that
\begin{equation}
\frac{L(m, F_{\widetilde{\pi}},\rho_3)}{{\pi}^{4m-3k+6} \Omega} \in E.
\end{equation}
\end{theorem}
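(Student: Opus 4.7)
The plan is to combine two ingredients that are already in place in the paper: the motivic Hodge calculation from Section 5, which pins down the critical integers, and the Rankin--Selberg integral representation of Heim \cite{He99} and B\"ocherer--Heim \cite{BH00}, \cite{BH06}, which supplies the algebraicity of critical values.

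First, I reduce the degree-$3$ spinor L-function to the $\mathrm{GL}(2) \times \mathrm{GSp}(4)$ convolution. By the modularity hypothesis $\widetilde{\pi} = \pi' \boxtimes \pi''$ and the definition in Section 4,
\begin{equation*}
L(s, F_{\widetilde{\pi}}, \rho_3) \;=\; L(s, \pi' \otimes \pi'', \rho_1 \otimes \rho_2),
\end{equation*}
and the completed L-function on both sides satisfies the functional equation $s \mapsto 3k-5-s$ from the previous theorem. This converts the problem into a question about the $\mathrm{GL}(2)\times\mathrm{GSp}(4)$ convolution L-function at $s=m$.

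Second, I determine the critical integers via Deligne's criterion applied to the motive $\mathcal{M}(F_{\widetilde{\pi}})=\mathcal{M}(h)\otimes\mathcal{M}(G)$ of Section 5. The Hodge type computed there is $(0,3k-6),(k-3,2k-3),(k-2,2k-4),(k-1,2k-5)$ together with their conjugates, of pure weight $w=3k-6$. An integer $m$ is critical iff both $L_\infty(s,\pi)$ and $L_\infty(3k-5-s,\pi)$ are holomorphic at $s=m$. Since $L_\infty(s,\pi)=\Gamma_{\C}(s)\prod_{j=1}^{3}\Gamma_{\C}(s-k+j)$, the rightmost pole on the left forces $m\geq k$ and, by the functional equation, $3k-5-m\geq k$ forces $m\leq 2k-5$. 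Hence the critical set is exactly $\{k,k+1,\ldots,2k-5\}$.

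Third, I apply the Rankin--Selberg integral representation. The primitivity hypothesis on $\pi''$ (non-vanishing first Fourier--Jacobi coefficient of $G$) is exactly the condition required so that the degree-$5$ Siegel Eisenstein series integral against $h\in S_{k-2}$ and $G\in S_k^2$ unfolds and computes $L(s,\pi'\otimes\pi'',\rho_1\otimes\rho_2)$ up to explicit $\Gamma$-factors and a product of Petersson norms. At an integer $m$ in the critical strip, the Siegel Eisenstein series of weight $k$ specializes to a nearly holomorphic modular form whose Fourier coefficients lie in $E$ after dividing by the appropriate power of $\pi$; combined with the $E$-rationality of the Fourier coefficients of the normalized $h$ and $G$, and tracking the $\Gamma_{\C}$-values at $s=m$, this yields
\begin{equation*}
\frac{L(m,F_{\widetilde{\pi}},\rho_3)}{\pi^{4m-3k+6}\,\Omega}\in E,\qquad \Omega \;:=\; \langle h,h\rangle\cdot\langle G,G\rangle,
\end{equation*}
the exponent $4m-3k+6$ being the cumulative contribution of the four $\Gamma_{\C}$-factors evaluated at $s=m$.

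The main obstacle is verifying the uniform behavior of the period across \emph{all} critical values. The existing literature \cite{BH00}, \cite{BH06} handles the rightmost critical values where holomorphy of the Eisenstein series is free; extending to the full interior strip $k\leq m\leq 2k-5$ requires using the nearly-holomorphic projection and checking that the arithmeticity of the Eisenstein series' Fourier expansion is preserved. The key subtlety, already noted in the paper, is the absence of a parity dependence in the period, reflecting the degeneration of the lifted Siegel form of degree $3$. A secondary technical point is showing that the $E$-algebraic structure on $H_B(\mathcal{M}(F_{\widetilde{\pi}}))$ coming from the tensor product $H_B(\mathcal{M}(h))\otimes H_B(\mathcal{M}(G))$ agrees with the one induced by the Hecke eigenvalue field of $F_{\widetilde{\pi}}$, so that $\Omega$ genuinely witnesses Deligne's conjecture rather than merely a factorizable sub-period.
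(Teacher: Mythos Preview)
Your proposal is correct and follows essentially the same route as the paper: reduce $L(s,F_{\widetilde{\pi}},\rho_3)$ to the $\mathrm{GL}(2)\times\mathrm{GSp}(4)$ convolution, read off the critical range $k\le m\le 2k-5$ from the explicit $\Gamma$-factors (the paper does this directly from $L_\infty$ rather than via the Hodge type, but that is the same computation), and then invoke the arithmetic results of \cite{BH06} with $\Omega=\langle h,h\rangle\,\langle G,G\rangle$. Your added caveats about nearly-holomorphic projection and the $E$-structure on the Betti realization go beyond what the paper's proof records, but the core argument is identical.
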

%
%
\begin{proof}
For the readers convenience we recall the basic steps.
All the ingredients have already been prepared in this paper.
Let $F_{\widetilde{\pi}}$ be given.
We have proven the the full Andrianov conjecture for $L(s,\widetilde{\pi},\rho_3$.
Hence we have a meromorphic continuation on the whole complex plane.
After completion we get a functional equation. From the explicit form
of the $\Gamma$-factors we can deduce the critical values in the sense of
Deligne and obtain exactly $m=k,k+1,\ldots, 2k-5$.
With the functoriality property determined in this paper
the lifting is directly related to another L-function.
It can be identified with the so-called $GL(2)$- twisted spinor L-function.
And hence properties of this L-function can be transfered
to $L(s,\widetilde{\pi},\rho_3)$.
Let $$\Omega:= \parallel h_{\pi'}\parallel^2 \cdot \parallel G_{\pi''} \parallel^2.$$
Here $h_{\pi'}$ and $G_{\pi''}$ are primitive forms as described above.
Then finally form the arithmetic results given in \cite{BH06} we 
obtain the desired result of the theorem.
\end{proof}
Identifying the period $\Omega$ with the period given in Deligne's conjecture
$c^{\pm}(\mathcal{M})$ of the attached motive leads to a proof of
this conjecture for the lifts considered in this paper.
This would imply that the 
%
the dimension of the 
$+$ and $-$ eigenspace of the Betti realization of $\mathcal{M}$ are equal.
Finally we would like to remark that Miyawaki's conjecture of TypII \cite{Mi92}, \cite{He07e}
extended to special values result is contained in this theorem.
Moreover the assumption to be primitive can be removed.


\end{document}